\title[$C^{\ast}$-Algebras and w$^{\ast}$fpp]{Separable $C^{\ast}$-Algebras and
weak$^{\ast}$-fixed point property}%
\author{Gero Fendler}%
\address{{Faculty of mathematics, University of Vienna}%
{Oskar-Morgenstern-Platz 1, 1090~Wien, Austria}}
\email{gero.fendler@univie.ac.at}  
\author{Michael Leinert}%
\address{{Institut f\"ur angewandte Mathematik, Universit\"at~Heidelberg}%
{Im Neuenheimer Feld, Geb\"aude 294, 69120~Heidelberg, Germany}}%
\email{leinert@math.uni-heidelberg.de}%
\keywords{weak$^*$ fixed point property, discrete dual, UKK$^*$} 
\subjclass[2000]{Primary: 46L05, 47L50; Secondary: 46L30, 47H10}               
\newtheorem{theorem}{Theorem}[section]
\newtheorem{lemma}{Lemma}[section]
\newtheorem{prop}[theorem]{Proposition}
\theoremstyle{definition}
\newtheorem{definition}{Definition}[section]
\theoremstyle{remark}
\newtheorem{remark}{Remark}[section]
\theoremstyle{corollary}
\newtheorem{corollary}{Corollary}[section]
\newcommand{\norm}[2][]{\lVert{#2}\rVert_{#1}} 
\newcommand{\abs}[1]{\lvert{#1}\rvert}
\newcommand{\leb}{\operatorname{dx}}
\newcommand{\NN}{\ensuremath{\mathbb{N}}}
\newcommand{\CC}{\ensuremath{\mathbb{C}}}
\newcommand{\scr}{\mathcal}
\newcommand{\sign}{\operatorname{sign}}
\newcommand{\diam}[1]{\operatorname{diam}(#1)}
\newcommand{\sep}[1]{\operatorname{sep}(#1)}
\renewcommand{\limsup}{\operatorname{limsup}}
\begin{document}

\maketitle
\begin{abstract}
We show that the  spectrum $\widehat{A}$
of a  separable $C^{\ast}$-algebra $A$  is discrete
if and only if $A^{\ast}$,
the  Banach space dual of $A$, has the weak$^{\ast}$ fixed point property.
We prove further that these properties are equivalent among others to
the uniform weak$^{\ast}$ Kadec-Klee property of $A^{\ast}$
and to the coincidence of the weak$^{\ast}$ topology with 
the norm topology on the pure states of $A$.
If one assumes the set theoretic diamond axiom,
then the separability is necessary. 
\end{abstract}

\section{Introduction}
It is a well known theorem in harmonic analysis that a locally compact 
group $G$ is compact if and only if its dual $\widehat{G}$
is discrete. This dual is just the spectrum of 
the full $C^{\ast}$-algebra $C^{\ast}(G)$ of $G$.
(The spectrum of a $C^{\ast}$-algebra being the unitary equivalence classes of 
the irreducible $\ast$-representations endowed with the inverse 
image of the Jacobson topology on the set of primitive ideals.)
There is a bunch of 
properties of the weak$^{\ast}$ topology for the  Fourier--Stieltjes
algebra $B(G)$ of $G$, which are equivalent to the compactness of the group,
see~\cite{FLL}. Some of them, which can be formulated
in  purely $C^{\ast}$-algebraic terms, are the topic of this note.
\par
Let $E$ be a Banach space and $K$ be a non--empty bounded closed convex subset.
$K$ has the {\em fixed point property} if any non--expansive map
$T:K\to K$ (i.e. $\norm{Tx-Ty}\leq\norm{x-y}$ for all $x,y\in K$)
has a fixed point. We say that $E$ has the {\em weak fixed point property} 
if every weakly compact convex subset of $E$ has the fixed point property.
If $E$ is a dual Banach space, we consider the 
{\em weak$^{\ast}$ fixed point property} of $E$, i.e.\  the property that 
every weak$^{\ast}$ compact convex subset of $E$ has the fixed point property.
Since in a dual Banach space convex weakly compact sets are weak$^{\ast}$ 
compact, the weak$^{\ast}$ fixed point property of $E$ implies the 
weak fixed point property.
\par
As in \cite{FLL}
we shall consider the case of a left reversible semigroup
$\mathcal{S}$ acting by non-expansive mappings 
separately continuously on  a non-empty weak$^{\ast}$ compact convex set 
$K\subset E$. We say that 
$E$ has the 
{\em weak$^{\ast}$ fixed point property for left reversible semigroups} if 
under these conditions there always is a common fixed point in $K$.
\par
One of the main results of~\cite{FLL} is that a locally compact group $G$ is
compact if and only if $B(G)$ has 
the weak$^{\ast}$ fixed point property for non-expansive
maps, equivalently for left reversible semigroups. 
\par
We shall prove that a separable $C^{\ast}$-algebra has a discrete spectrum
if and only if its Banach space dual has the 
weak$^{\ast}$ fixed point property. 
We consider separable $C^{\ast}$-algebras only, 
because we use that a separable $C^{\ast}$-algebra with one point spectrum
is known to be isomorphic to the algebra of compact operators on some
Hilbert space~\cite{Ros73}. The converse, namely that the $C^{\ast}$-algebra
of the compact operators have up to unitary equivalence only one irreducible
representation, was proved by Naimark~\cite{Nai48}. His question~\cite{Nai51}
whether these are the only
$C^{\ast}$-algebras with a one point spectrum became known as Naimark's problem.
Assuming the set theoretic diamond axiom, independent from ZFC 
(Zermelo Frankel set theory
with the axiom of choice), Akemann and Weaver~\cite{AkeWea} 
answered  this to the negative.
We shall prove that this $C^{\ast}$-algebra does not have the 
weak fixed point property. This shows that the separability assumption in our
theorem is essential.
\par
Section~\ref{sec:wfpp} contains our main theorem and its proof. In
section~\ref{sec:kk}
we consider the uniform weak$^{\ast}$ Kadec-Klee property
(see definition~{\ref{def:ukk}) 
of the Banach space  dual of the $C^*$-algebras in question.
For the trace class operators this property holds true as proved by 
Lennard~\cite{Len90}. As he points out, the weak$^{\ast}$ fixed point property
can be obtained, via weak$^{\ast}$ normal  structure of 
non-empty weakly$^{\ast}$
compact sets
by an application of UKK$^{\ast}$, as shown by van Dulst and Sims~\cite{DuSi83}.
For corresponding results with the weak topology 
we refer to the article by Kirk~\cite{Kirk65} and that by Lim~\cite{Lim80}.
\section{Weak$^{\ast}$ Fixed Point Property}\label{sec:wfpp}
In this section $A$ shall be a separable $C^{\ast}$-algebra, unless stated otherwise. 
We denote by $\pi^{'}\simeq\pi$ unitary equivalence of 
$*$-representations $\pi^{'}$ and $\pi$. By abuse of notation we denote by
$\pi$ 
also its equivalence class.
\par
The following proposition is based on a theorem of Anderson~\cite{And}, which 
itself refines a lemma of Glimm~\cite[Lemma 9]{Glimm2} and
 \cite[Theorem 2]{Glimm1}. 
\begin{prop}\label{prop:comp}Let $A$ be a separable $C^{\ast}$-algebra.
Let $\pi^{'}\not\simeq\pi\in \widehat{A}$ with $\pi^{'}\in\overline{\{\pi\}}$
be given and assume that $\varphi$ is a state of $A$ associated with $\pi^{'}$.
Then there is an orthonormal sequence $(\xi_n)$ in $H_{\pi}$ with
$(\pi(.)\xi_n|\xi_n)\to \varphi$ $\text{weakly}^{\ast}$.
\end{prop}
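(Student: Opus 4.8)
The plan is to derive the statement from the theorem of Anderson cited above, whose whole point is to upgrade an approximating net of vector states into an \emph{orthonormal} sequence. Write $V_{\pi}=\{(\pi(\cdot)\xi|\xi):\xi\in H_{\pi},\ \norm{\xi}=1\}$ for the set of vector states of $\pi$. Anderson's result applies once two things are checked: that $\varphi$ lies in the weak$^{\ast}$ closure of $V_{\pi}$, and that the GNS representation $\pi_{\varphi}$ is disjoint from $\pi$. So I would reduce the proposition to these two verifications and then invoke his theorem.

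The disjointness is immediate. Since $\pi'\in\widehat{A}$ is irreducible and $\varphi$ is a state associated with it, $\varphi$ is automatically pure and its GNS representation $\pi_{\varphi}$ is unitarily equivalent to $\pi'$. Two irreducible representations are either equivalent or disjoint; as $\pi'\not\simeq\pi$, they are disjoint, hence $\pi_{\varphi}$ and $\pi$ are disjoint. In particular $\varphi$ is not itself a vector state of $\pi$: if $\varphi=(\pi(\cdot)\xi|\xi)$ with $\norm{\xi}=1$, irreducibility of $\pi$ forces $\overline{\pi(A)\xi}=H_{\pi}$, so $\pi\simeq\pi_{\varphi}\simeq\pi'$, a contradiction.

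Next I would show $\varphi\in\overline{V_{\pi}}^{\,w^{\ast}}$. Recall that the canonical map $q$ from the pure state space $P(A)$, with the weak$^{\ast}$ topology, onto $\widehat{A}$ is continuous, open and surjective, and that $q^{-1}(\{\pi\})=V_{\pi}$. Continuity of $q$ shows that $q^{-1}(\overline{\{\pi\}})$ is weak$^{\ast}$ closed and contains $V_{\pi}$, hence contains $\overline{V_{\pi}}^{\,w^{\ast}}\cap P(A)$. Conversely, if $\psi\in P(A)$ with $q(\psi)\in\overline{\{\pi\}}$, then for every weak$^{\ast}$ neighbourhood $U$ of $\psi$ in $P(A)$ the set $q(U)$ is open (openness of $q$) and meets $\{\pi\}$, so $U\cap V_{\pi}\neq\emptyset$; thus $\psi\in\overline{V_{\pi}}^{\,w^{\ast}}$. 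Since $\varphi\in P(A)$ and $q(\varphi)=\pi'\in\overline{\{\pi\}}$, this gives $\varphi\in\overline{V_{\pi}}^{\,w^{\ast}}$. (Equivalently: $\pi'\in\overline{\{\pi\}}$ means $\pi'$ is weakly contained in $\pi$, and a pure state weakly contained in an irreducible representation is a weak$^{\ast}$ limit of its vector states.)

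With both hypotheses in hand, Anderson's theorem yields the desired orthonormal sequence. Concretely, since $A$ is separable the weak$^{\ast}$ topology is metrizable on bounded subsets of $A^{\ast}$, so one fixes a countable dense subset $\{a_{1},a_{2},\dots\}$ of $A$ and runs the inductive selection underlying the arguments of Glimm and Anderson: having chosen $\xi_{1},\dots,\xi_{n-1}$, one picks a unit vector $\xi_{n}$ orthogonal to $\xi_{1},\dots,\xi_{n-1}$ with $\abs{(\pi(a_{k})\xi_{n}|\xi_{n})-\varphi(a_{k})}<1/n$ for $k\le n$, obtaining an orthonormal sequence $(\xi_{n})$ with $(\pi(\cdot)\xi_{n}|\xi_{n})\to\varphi$ weakly$^{\ast}$. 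The main obstacle — and the reason the step ``pick $\xi_{n}$'' is legitimate — is exactly the content of Anderson's (and Glimm's) theorem: that $\varphi$ can still be approximated by vector states $(\pi(\cdot)\xi|\xi)$ with $\xi$ in the orthogonal complement of \emph{any} prescribed finite-dimensional subspace of $H_{\pi}$. That is where disjointness of $\pi_{\varphi}$ from the irreducible $\pi$ (no part of $\varphi$ being a vector state of $\pi$) is used essentially; the mere existence of one approximating net of vector states is the soft part.
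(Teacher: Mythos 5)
Your overall strategy is the paper's: everything is reduced to Anderson's theorem. But you invoke that theorem with hypotheses it does not have. Anderson's result (like Glimm's lemma, which it refines) is stated for a separable $C^{\ast}$-algebra acting irreducibly on $H$ and a state $f$ with $f(\mathcal{K}(H)\cap A)=\{0\}$; the conclusion is that $f$ is a weak$^{\ast}$ limit of vector states along an \emph{orthonormal} sequence. Neither ``$\varphi$ lies in the weak$^{\ast}$ closure of $V_{\pi}$'' nor ``$\pi_{\varphi}$ is disjoint from $\pi$'' is that hypothesis, and the first of these is genuinely weaker: a vector state of $\mathcal{K}(H)$ is trivially a weak$^{\ast}$ limit of vector states, yet it is never a limit along an orthonormal sequence, since $(T\xi_{n}|\xi_{n})\to 0$ for compact $T$ and orthonormal $(\xi_{n})$. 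So your openness-of-$q$ argument, while correct in itself, verifies a condition that is a \emph{consequence} of Anderson's hypothesis (via Glimm's lemma) rather than the hypothesis, and on its own it cannot justify the orthonormal refinement.

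What you actually need, and what is missing, is the bridge from your verified conditions to Anderson's: disjointness of $\pi_{\varphi}$ from $\pi$ forces $\varphi$ to annihilate $\pi(A)\cap\mathcal{K}(H_{\pi})$ (after noting that $\varphi$ factors through $\pi(A)$ because $\ker\pi'\supset\ker\pi$). This is precisely case (i) of the paper's proof: if $\varphi$ did not annihilate $\pi(A)\cap\mathcal{K}(H_{\pi})$, then $\pi(A)$ would contain a nonzero compact operator, hence all of $\mathcal{K}(H_{\pi})$ as a two-sided ideal (Dixmier, Corollary 4.1.10); the restriction of $\pi_{\varphi}$ to this ideal would be nonzero, hence irreducible, hence unitarily equivalent to the identical representation of $\mathcal{K}(H_{\pi})$, which forces $\pi_{\varphi}\simeq\pi$ and contradicts the disjointness you established. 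Your parenthetical remark that ``no part of $\varphi$ is a vector state of $\pi$'' gestures at this but does not carry it out, and it is not interchangeable with the annihilation condition. With that argument inserted your proof closes and essentially coincides with the paper's; without it, the appeal to Anderson's theorem is not licensed.
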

\begin{proof}
By assumption, $\ker{\pi^{'}}\supset\ker{\pi}$ so there is a representation 
$\pi^{\circ}$ of $\pi(A)$ such that $\pi^{'}=\pi^{\circ}\circ\pi$.
We may therefore assume that $\pi$ is the identical representation.
We denote $\mathcal{K}(H)$ the $C^{\ast}$-algebra of compact operators
on the Hilbert space $H$.
\begin{trivlist}
\item[(i)]
Suppose 
$\varphi_{\vert \mathcal{K}{(H_{\pi})}\cap A}\not= \emptyset$. Then 
$\pi^{'}=\pi_{\varphi}$
does not annihilate 
$\mathcal{K}(H_{\pi})\cap A$. By \cite[Corollary 4.1.10]{Dix} 
$\mathcal{K}{(H_\pi)}\subset A$ and it is a two sided ideal.
This corollary does not cover our case completely but we 
follow its proof.
$\pi^{'}$
is faithful on 
$\mathcal{K}{(H_{\pi})}$ and $\pi^{'}_{\vert\mathcal{K}{(H_{\pi})}}$
is an irreducible representation by \cite[2.11.3]{Dix}.
Therefore it is equivalent to the identical representation of 
$\mathcal{K}{(H_{\pi})}$.
Now $\pi^{'}$ is equivalent to the identical representation 
of $A$,
by \cite[2.10.4(i)]{Dix}.
This contradicts the assumption, so this case can not happen.
\item[(ii)]
If $\varphi_{\vert \mathcal{K}{(H_{\pi})}\cap A}= 0$.
then by \cite[Theorem]{And} there is an orthonormal sequence
$(\xi_n)$ in $H_{\pi}$ with
$(\pi(.)\xi_n|\xi_n)\to \varphi$ $\text{weakly}^{\ast}$.\qedhere
\end{trivlist}
\end{proof}
\begin{lemma}\label{lem:type}
Let $M$ be a von Neumann algebra. If its predual $M_{\ast}$.
has the weak fixed point property, then 
$M$ is of type I. Moreover $M$ is atomic.
\end{lemma}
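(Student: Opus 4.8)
The plan is to argue by contraposition: assuming $M$ is \emph{not} atomic, I will show that $M_{\ast}$ does not have the weak fixed point property. This also yields the type I assertion, since a von Neumann algebra with a type II or type III summand is never atomic, while conversely an atomic von Neumann algebra is an $\ell^{\infty}$-direct sum of factors $B(H_i)$ and hence of type I. The argument rests on two facts. First, Alspach's classical construction of a weakly compact convex subset of $L^{1}([0,1])$ carrying a fixed-point-free isometry, which shows that $L^{1}([0,1])$ fails the weak fixed point property. Second, the elementary observation that the weak fixed point property is inherited by norm-closed subspaces: a weakly compact convex subset of a closed subspace $F\subseteq E$ is weakly compact and convex in $E$, and a non-expansive self-map of it is non-expansive in $E$. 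Consequently it suffices to embed $L^{1}([0,1])$ isometrically into $M_{\ast}$ whenever $M$ is not atomic.

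Such embeddings arise as pre-adjoints of normal conditional expectations. If $E\colon M\to N$ is a normal conditional expectation onto a von Neumann subalgebra $N$, then $E$ is weak$^{\ast}$-continuous as a map $M\to N$, so $\psi\mapsto\psi\circ E$ defines a map $E_{\ast}\colon N_{\ast}\to M_{\ast}$ with $(E_{\ast})^{\ast}=E$ on $M$; and $E_{\ast}$ is an isometry, because $E$ carries the closed unit ball of $M$ \emph{onto} that of $N$, so that $\|E_{\ast}\psi\|=\sup\{\,|\psi(Ex)|:\|x\|\le 1\,\}=\|\psi\|$ for $\psi\in N_{\ast}$. Thus everything reduces to: if $M$ is not atomic, find a von Neumann subalgebra of $M$ that contains a copy of $L^{\infty}([0,1])$ and onto which there is a normal conditional expectation.

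To produce it I would first split off the atomic part. The supremum $z$ of all minimal projections of $M$ is central, since the set of minimal projections is stable under unitary conjugation; $Mz$ is atomic, and the hypothesis gives $M(1-z)\neq 0$, a von Neumann algebra with no minimal projections, onto which $x\mapsto x(1-z)$ is a normal conditional expectation. Choosing a nonzero positive $\varphi\in M(1-z)_{\ast}$ and passing to the corner $qMq$ with $q$ the support projection of $\varphi$ — again via the normal conditional expectation $x\mapsto qxq$ — we may assume $M$ has no minimal projections and is $\sigma$-finite, hence carries a faithful normal state. Now look at the type decomposition: a nonzero type I summand of $M$ has the form $L^{\infty}(X)\otimes B(H)$ with $X$ non-atomic (for a pure type $\mathrm{I}_{1}$ summand it is already $L^{\infty}(X)$), and a slice map $\mathrm{id}\otimes\omega$ followed by a conditional expectation within the abelian algebra $L^{\infty}(X)$ produces $L^{\infty}([0,1])$ with a normal conditional expectation; a type II summand reduces, after cutting down to a finite ($\mathrm{II}_{1}$) corner, to the fact that in a finite von Neumann algebra every subalgebra — in particular a diffuse masa $\cong L^{\infty}([0,1])$ — is the range of a trace-preserving normal conditional expectation; and a type III summand is handled by Tomita--Takesaki theory, passing to the centralizer $M^{\varphi}$ of a suitably chosen faithful normal state or weight (which is a finite, indeed type II, von Neumann algebra admitting a normal conditional expectation $E^{\varphi}\colon M\to M^{\varphi}$) and then running the masa argument inside $M^{\varphi}$. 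In all cases, composing gives a normal conditional expectation $M\to\mathcal{D}\cong L^{\infty}([0,1])$; taking pre-adjoints embeds $L^{1}([0,1])$ isometrically into $M_{\ast}$, so $M_{\ast}$ fails the weak fixed point property, a contradiction. Hence $M$ is atomic, and in particular of type I.

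The delicate step is the type III (and non-$\sigma$-finite) case: unlike in finite von Neumann algebras, there is in general no normal conditional expectation onto an arbitrary abelian subalgebra, so one is forced to route through a centralizer and to choose the state or weight carefully so that its centralizer is large enough to contain a diffuse abelian part. I would also remark that in the intended application the von Neumann algebra is $A^{\ast\ast}$ with $A$ separable, so $M$ has separable predual and is automatically $\sigma$-finite, which considerably lightens the structure-theoretic bookkeeping above.
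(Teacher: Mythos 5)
Your overall strategy coincides with the paper's: reduce to Alspach's fixed-point-free isometry on a weakly compact convex subset of $L^1([0,1])$, together with the observation that the weak fixed point property passes to norm-closed subspaces, so that it suffices to embed $L^1([0,1])$ isometrically into $M_{\ast}$ whenever $M$ is not atomic. The paper, however, obtains the embedding by citation: for $M$ not of type I it invokes the theorem of Marcelino Nahny that $L^1(\mathcal{R},\tau_{\mathcal{R}})$ embeds isometrically into the predual of any von Neumann algebra not of type I, combined with the embedding of $L^1([0,1])$ into $L^1(\mathcal{R},\tau_{\mathcal{R}})$; for $M$ of type I but not atomic it uses the normal semifinite faithful trace and a cited proposition of Lau and Leinert. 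You instead manufacture the embedding directly as the pre-adjoint of a normal conditional expectation onto a copy of $L^{\infty}([0,1])$. Your reductions (splitting off the atomic central summand, cutting to a $\sigma$-finite corner, the slice map in the diffuse type I case, the trace-preserving expectation onto a diffuse abelian subalgebra in the finite type II case) are all sound, and the heredity and pre-adjoint arguments are correct.

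The genuine gap is the type III case, which is exactly the hard part of the theorem you are re-deriving. Your parenthetical claim that the centralizer $M^{\varphi}$ of a faithful normal state is ``a finite, indeed type II, von Neumann algebra'' is false for a general state: on many type III factors there exist faithful normal states with trivial or atomic centralizer, so ``suitably chosen'' is carrying the entire burden of the argument without justification. What is actually needed is the existence of a faithful normal state or weight whose centralizer contains a diffuse finite subalgebra that is the range of a normal conditional expectation; the standard way to produce one is the Connes--Takesaki continuous decomposition via a dominant weight, whose centralizer is of type II$_\infty$ and is the range of a normal expectation by Takesaki's criterion. That is a substantial piece of modular theory which your sketch does not supply, and once you import it you have essentially reproved the embedding result the paper simply cites. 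So the proof is incomplete precisely where the paper leans on its reference: either fill in the dominant-weight argument in detail, or cite the predual embedding theorem directly as the paper does.
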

\begin{proof}
The argument follows the proof of ~\cite{Rand10}{ Theorem 4.1}.
We denote by $\mathcal{R}$ the hyperfinite factor of type II$_1$
and $\tau_{\mathcal{R}}$ its canonical finite trace 
(see e.g.\ \cite{Tak02}).
In ~\cite{MarNha} it is proved that its predual 
$L^1(\mathcal{R},\tau_{\mathcal{R}})$ embeds isometrically into the predual
of any von Neumann algebra not of type I.
As $L^1([0,1],\leb)$ embeds isometrically into 
$L^1(\mathcal{R},\tau_{\mathcal{R}})$ (\cite{LauLeinert08}{ Lemma 3.1})
we conclude from Alspach's theorem~\cite{Alsp} that the weak 
fixed point property of $M_{\ast}$ forces $M$ to be a 
type I von Neumann algebra.
So $M$ has a normal semifinite faithful trace~\cite[A35]{Dix}.
Now,
\cite{LauLeinert08}{ Proposition 3.4}
implies that $M$ is an atomic von Neumann algebra.
\end{proof}
Lemma~\ref{lem:type} and Proposition 3.4 of~\cite{LauLeinert08}
provide the converse to~\cite[Lemma 3.1]{LauMahUlg}
and thus answers a question of
A.~T.-M.~Lau~\cite[Problem 1]{Lau-AHA13}:
\begin{corollary}
Let $M$ be a von Neumann algebra, then $M_{\ast}$ has the 
weak fixed point property if and only if it has the Radon--Nikodym property.
\end{corollary}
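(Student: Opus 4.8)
The plan is to read the statement off results already assembled in this section together with one external input, so that the proof reduces to checking that the pieces interlock into an equivalence. One direction is immediate: by \cite[Lemma 3.1]{LauMahUlg}, if $M_\ast$ has the Radon--Nikodym property then $M_\ast$ has the weak fixed point property, so that half needs only a citation. (Indeed the sentence preceding the corollary announces exactly that Lemma~\ref{lem:type} together with \cite{LauLeinert08}{ Proposition 3.4} supplies the converse of \cite[Lemma 3.1]{LauMahUlg}.)

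For the other direction I would chain two steps. First, assuming $M_\ast$ has the weak fixed point property, Lemma~\ref{lem:type} applies verbatim and yields that $M$ is a von Neumann algebra of type I, and in fact atomic. Second, I would invoke \cite{LauLeinert08}{ Proposition 3.4}: for an atomic von Neumann algebra $M$ the predual $M_\ast$ is isometrically an $\ell^1$-direct sum of trace-class spaces $\mathcal{S}^1(H_i)$, each of which has the Radon--Nikodym property (the classical fact that the trace class has RNP), and the Radon--Nikodym property passes to $\ell^1$-sums; hence $M_\ast$ has the Radon--Nikodym property. Combining the two directions gives the asserted equivalence.

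Since the substance sits inside the cited results --- Lemma~\ref{lem:type}, which itself rests on Alspach's theorem~\cite{Alsp} and the embedding results of \cite{MarNha} and \cite{LauLeinert08}, and on \cite{LauLeinert08}{ Proposition 3.4} --- there is no real obstacle internal to this section; the only thing I would verify carefully is the precise form of \cite{LauLeinert08}{ Proposition 3.4}. The proof of Lemma~\ref{lem:type} used only the implication ``$M$ satisfies the trace condition $\Rightarrow$ $M$ atomic'', whereas here I need ``$M$ atomic $\Rightarrow$ $M_\ast$ has the Radon--Nikodym property''. If Proposition 3.4 is stated as the biconditional ``$M$ atomic $\iff$ $M_\ast$ has RNP'' the argument closes immediately; otherwise I would supply the missing arrow by the $\ell^1$-decomposition argument above.
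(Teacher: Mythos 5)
Your argument is exactly the paper's: the corollary is stated there without a separate proof precisely because, as the sentence preceding it indicates, it is the concatenation of \cite[Lemma 3.1]{LauMahUlg} (Radon--Nikodym property implies weak fixed point property) with Lemma~\ref{lem:type} and \cite{LauLeinert08} Proposition~3.4 (weak fixed point property implies $M$ atomic implies $M_{\ast}$ has the Radon--Nikodym property), which is the chain you assemble. Your cautionary remark about the precise form of Proposition~3.4, with the fallback via the $\ell^1$-sum of trace class spaces, is a reasonable safeguard but not needed, since that proposition does contain the arrow ``$M$ atomic $\Rightarrow$ $M_{\ast}$ has RNP''.
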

\begin{remark}
If now $A$ is a $C^{\ast}$-algebra whose Banach space dual $A^{\ast}$ has the 
weak fixed point property, then, by Lemma~\ref{lem:type}, 
$A^{\ast\ast}$ is a type I von Neumann algebra
and
we know from ~\cite[6.8.8]{Ped}
that $A$ is a 
type I $C^{\ast}$-algebra. Especially, its spectrum,
which coincides with the space of its primitive ideals in this case, 
is a $T_0$ topological space. A fortiori, this also holds if $A^{\ast}$ has the
weak$^{\ast}$ fixed point property.
\end{remark}
\begin{prop}\label{prop:closed}
Assume that $A$ is separable.
If $A^{\ast}$ has the weak$^{\ast}$ fixed point property then
points in $\widehat{A}$ are closed.
\end{prop}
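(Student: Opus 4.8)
The plan is to establish the statement indirectly: assuming $A^{\ast}$ has the weak$^{\ast}$ fixed point property, I want to show that every $\pi\in\widehat{A}$ is a closed point. Suppose this fails, so there are $\pi'\not\simeq\pi$ in $\widehat{A}$ with $\pi'\in\overline{\{\pi\}}$. Pick a state $\varphi$ associated with $\pi'$; then $\varphi=(\pi'(\cdot)\xi\mid\xi)$ for some unit vector $\xi\in H_{\pi'}$, and Proposition~\ref{prop:comp} supplies an orthonormal sequence $(\xi_n)$ in $H_{\pi}$ with $\omega_n:=(\pi(\cdot)\xi_n\mid\xi_n)\to\varphi$ weak$^{\ast}$. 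The rest of the argument builds, out of the $\omega_n$ and $\varphi$, a non-empty weak$^{\ast}$ compact convex set $K\subseteq A^{\ast}$ carrying a fixed-point-free non-expansive self-map, contradicting the hypothesis.

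Let $K:=\overline{\operatorname{conv}}^{\,w^{\ast}}\{\omega_n:n\geq1\}$, a weak$^{\ast}$ compact convex subset of the unit ball of $A^{\ast}$. Since $\omega_n\to\varphi$ weak$^{\ast}$, the set $S:=\{\omega_n:n\geq1\}\cup\{\varphi\}$ is weak$^{\ast}$ compact, metrizable and countable, and its points are pairwise distinct (for $\omega_m\neq\omega_n$ use irreducibility of $\pi$ and $\xi_m\perp\xi_n$; for $\varphi\neq\omega_n$ use that these are pure states with inequivalent GNS representations). Hence $K$ is the set of barycenters of probability measures on $S$, and every such measure is purely atomic, so
\[
K=\Big\{\textstyle\sum_{n\geq1}t_n\omega_n+t_\infty\varphi\ :\ t_n,t_\infty\geq0,\ \sum_{n\geq1}t_n+t_\infty=1\Big\}.
\]

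The heart of the matter is the isometric identity
\[
\Big\|\textstyle\sum_{n}c_n\omega_n+c_\infty\varphi\Big\|=\sum_{n}\abs{c_n}+\abs{c_\infty}
\]
for scalars with $\sum_{n}\abs{c_n}+\abs{c_\infty}<\infty$; only ``$\geq$'' needs work. Fix $N$. Since $\pi$ and $\pi'$ are inequivalent irreducible representations, the Kadison transitivity theorem for the pair $\{\pi,\pi'\}$ yields a self-adjoint $a\in A$ with $\norm{a}\leq1$, $\pi(a)\xi_n=\sign(c_n)\xi_n$ for $n\leq N$, and $\pi'(a)\xi=\sign(c_\infty)\xi$; then $\omega_n(a)=\sign(c_n)$ for $n\leq N$, $\varphi(a)=\sign(c_\infty)$, and $\abs{\omega_n(a)}\leq1$ for $n>N$, so testing the functional at $a$ produces the lower bound $\sum_{n\leq N}\abs{c_n}-\sum_{n>N}\abs{c_n}+\abs{c_\infty}$, which tends to $\sum_n\abs{c_n}+\abs{c_\infty}$ as $N\to\infty$. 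Applying this with $c_n=t_n-s_n$ and $c_\infty=t_\infty-s_\infty$ shows in particular that the coefficients of a point of $K$ are uniquely determined, and that $\norm{\mu-\nu}=\sum_n\abs{t_n-s_n}+\abs{t_\infty-s_\infty}$ for $\mu,\nu\in K$.

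Uniqueness makes the following definition unambiguous: set
\[
T\Big(\textstyle\sum_{n}t_n\omega_n+t_\infty\varphi\Big):=t_\infty\,\omega_1+\sum_{n\geq1}t_n\,\omega_{n+1},
\]
the coefficient shift $\varphi\mapsto\omega_1$, $\omega_n\mapsto\omega_{n+1}$. Its image has non-negative coefficients of sum $1$, so $T$ maps $K$ into $K$, and the norm formula above gives $\norm{T\mu-T\nu}=\norm{\mu-\nu}$, so $T$ is non-expansive (in fact isometric). If $T\mu=\mu$, then the $\varphi$-coefficient of $\mu$ is $0$ and all the $t_n$ coincide, hence all vanish, contradicting $\sum_n t_n+t_\infty=1$; so $T$ has no fixed point, which is the promised contradiction. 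I expect the one genuinely delicate point to be the lower bound in the isometric identity: producing, for each $N$, a single element of $A$ that simultaneously prescribes the values $\omega_1(a),\dots,\omega_N(a)$ and $\varphi(a)$ --- which is exactly where irreducibility, the inequivalence $\pi\not\simeq\pi'$, and the joint Kadison transitivity theorem for finitely many inequivalent irreducible representations are used; identifying $K$ with the coefficient simplex is then a routine consequence of the metrizability and countability of $S$.
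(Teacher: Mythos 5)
Your proof is correct, and it reaches the contradiction by the same overall skeleton as the paper (build the simplex spanned by $\varphi$ and the vector states $\omega_n$, prove uniqueness of coefficients and the $\ell^1$-isometry, and shift), but the crucial technical step is handled by a genuinely different mechanism. The paper proves the identity $\norm{\sum_n c_n\omega_n+c_\infty\varphi}=\sum_n\abs{c_n}+\abs{c_\infty}$ by passing to the enveloping von Neumann algebra $A^{\ast\ast}$: it first uses the weak$^{\ast}$ fixed point hypothesis (through Lemma~\ref{lem:type}) to conclude that $A^{\ast\ast}$ is atomic, so that the universal representation decomposes into irreducibles and \cite[Lemma 4.2]{FLL} gives orthogonality of the support projection of $\varphi$ to those of the $\omega_n$; the coefficients are then read off by evaluating ultraweak extensions at these support projections and at the rank-one projections $P_n\in B(H_\pi)$. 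You instead stay inside $A$ and use the joint Kadison transitivity theorem for the two inequivalent irreducible representations $\pi,\pi^{'}$ to produce, for each $N$, a self-adjoint contraction $a$ with $\omega_n(a)=\sign(c_n)$ for $n\leq N$ and $\varphi(a)=\sign(c_\infty)$, which yields the lower bound directly. This buys real economy: the fixed point hypothesis enters your argument only once, at the very end, and you bypass Lemma~\ref{lem:type} (hence Alspach's theorem and the embedding of $L^1(\mathcal{R},\tau_{\mathcal{R}})$) entirely; the paper's route, on the other hand, is the one that is reused verbatim in the proof of Theorem~\ref{thm:wfpp}. Two points worth making explicit in your version: the norm bound $\norm{a}\leq 1$ is obtained by truncating a first interpolating self-adjoint element with a continuous function, which is legitimate here because the relevant finite-dimensional subspace is spanned by eigenvectors; and the joint transitivity theorem needs only that $\pi$ and $\pi^{'}$ are inequivalent (hence disjoint, with $(\pi\oplus\pi^{'})(A)^{''}=B(H_\pi)\oplus B(H_{\pi^{'}})$), which holds even though $\ker\pi^{'}\supset\ker\pi$.
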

\begin{proof}
If $\{\pi\}$ is non--closed in $\widehat{A}$ then there
is $\pi^{'}\not\simeq \pi$ contained in $\overline{\{\pi\}}$.
By  Proposition~\ref{prop:comp}, if 
$\varphi$ is a (pure) state associated with $\pi^{'}$, then there exists
an orthonormal sequence $(\xi_n)$ in $H_{\pi}$ such that
$\varphi_n:= (\pi(.)\xi_n|\xi_n)\to \varphi$ weakly$^{\ast}$.
Now we proceed as in \cite{FLL}. Set 
$\varphi_0=\varphi$, then the set 
$$C=\{\;\sum_0^{\infty} \alpha_i \varphi_i\;:\;%
0\leq \alpha_i\leq 1, \sum_0^{\infty} \alpha_i =1\}$$
is convex weak$^{\ast}$ compact.
The coefficients of every $f=\sum_0^{\infty} \alpha_i \varphi_i\in C$
are uni\-que\-ly determined: 
\par
Since $A^{\ast}$ is assumed to have the weak$^{\ast}$ fixed point property,
the universal enveloping von Neumann algebra  $A^{\ast\ast}$ of $A$ is atomic.
By~\cite[Appendix A]{FLL}, the universal representation of $A$ 
decomposes into a direct
sum of irreducible representations. 
Hence we may apply \cite{FLL}{ Lemma 4.2\footnote{
From the context there, one sees that there it is assumed 
that every $\ast$-representation of the $C^{\ast}$-algebra in question 
decomposes into a direct {H}ilbert sum of irreducible reprersentations}}
to see that the support 
$P_0$ of $\varphi_0$
(in the universal enveloping von Neumann algebra)
is orthogonal to the 
support of every  other $\varphi_i$.
So, denoting the ultraweak extensions to $A^{\ast\ast}$ of $f\in C$ and
$\varphi_i,\;i\geq 0$, by the same symbols again, we have
$f(P_0)=\alpha_0\varphi_0(P_0)=\alpha_0$.
It remains to pick out the remaining $\alpha_i$ from the sum
$\sum_{1}^{\infty}\alpha_i \varphi_i$.
Since $\pi$ is irreducible, its ultraweak extension to 
$A^{\ast\ast}$ has $B(H_{\pi})$ as its range.
So we can evaluate 
the sum $\sum_{1}^{\infty}\alpha_i\varphi_i$
at $P_n$, the 1-dimensional projection onto $\CC\cdot\xi_n$,
which yields exactly $\alpha_n$.
\par
Now we may define $T: C\to C$ by
$$T\left(\sum_0^{\infty} \alpha_i \varphi_i\right)=
\sum_0^{\infty} \alpha_i \varphi_{i+1}.$$
To show that this map is distance preserving
it suffices to see that 
$\norm{\sum_0^{\infty} \beta_i \varphi_{i}} = \sum_0^{\infty} \abs{\beta_i}$,
for  real summable $\beta_i$.
Clearly $\norm{\sum_0^{\infty} \beta_i \varphi_{i}}= \abs{\beta_0}+
\norm{\sum_1^{\infty} \beta_i \varphi_{i}}$, since the support of 
$\varphi_0$ is orthogonal to the support of any $\varphi_i,\;i\geq 1$.
Since $B(H_{\pi})=A^{\ast\ast}/{\ker(\pi)}$ isometrically, the norm
$\norm{\sum_1^{\infty} \beta_i \varphi_{i}}$ can be calculated in $B(H_{\pi})$.
\par
The element
$Q=\sum_1^{\infty}\sign(\beta_i) P_i\in B(H_{\pi})$ has norm $1$ and
$\sum_1^{\infty} \beta_i \varphi_{i}(Q)=\sum_1^{\infty} \abs{\beta_i}$.
So $\norm{\sum_0^{\infty} \beta_i \varphi_{i}}\geq \sum_0^{\infty} \abs{\beta_i}$.
In fact equality holds, since the reverse inequality is plain.
Hence $T$ is distance preserving.
The definition of $T$ is such that the only possible fixed point would be $0$.
But $0\notin C$ and we arrive at a contradiction.
\end{proof}
\begin{theorem}\label{thm:wfpp}
For a separable $C^{\ast}$-algebra the following are equivalent
\begin{itemize}
\item[(i)]The spectrum $\widehat{A}$ is discrete.
\item[(ii)]$A^{\ast}$ has the weak$^{\ast}$ fixed point property.
\item[(iii)]$A^{\ast}$ has the weak$^{\ast}$ fixed point property
for left reversible semigroups.
\end{itemize}
\end{theorem}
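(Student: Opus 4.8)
The plan is to establish the cycle $(i)\Rightarrow(iii)\Rightarrow(ii)\Rightarrow(i)$, the third implication being routine: a single non-expansive self-map of a weak$^{\ast}$-compact convex set generates a commutative, hence left reversible, semigroup, and the continuity of the action required in $(iii)$ is automatic for such a semigroup, cf.~\cite{FLL}; so a common fixed point of the semigroup is a fixed point of the map. The two implications that carry content are $(i)\Rightarrow(iii)$, which rests on the structure of separable $C^{\ast}$-algebras with discrete spectrum together with the uniform Kadec--Klee material of Section~\ref{sec:kk}, and $(ii)\Rightarrow(i)$, which combines Lemma~\ref{lem:type}, the Remark above, Proposition~\ref{prop:closed}, and a shift construction like the one in the proof of Proposition~\ref{prop:closed}.

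For $(i)\Rightarrow(iii)$: if $\widehat{A}$ is discrete, then each singleton $\{\pi\}$ is open, hence the spectrum of an ideal $I_{\pi}$ of $A$ with one-point spectrum, so $I_{\pi}\simeq\mathcal{K}(H_{\pi})$ by~\cite{Ros73}. These ideals are mutually orthogonal, their closed sum is an ideal with full spectrum and so equals $A$, and separability makes $\widehat{A}$ countable; thus $A$ is a $c_{0}$-direct sum of algebras $\mathcal{K}(H_{n})$ and $A^{\ast}$ the corresponding $\ell^{1}$-direct sum of trace-class spaces. By Lennard~\cite{Len90} each trace-class summand has UKK$^{\ast}$ (Definition~\ref{def:ukk}), which survives $\ell^{1}$-sums (Section~\ref{sec:kk}), so by van Dulst--Sims~\cite{DuSi83} every non-empty weak$^{\ast}$-compact convex subset of $A^{\ast}$ has weak$^{\ast}$ normal structure, and a Lim-type argument~\cite{Lim80} carried out in the weak$^{\ast}$ topology exactly as in~\cite{FLL} then yields a common fixed point for any left reversible semigroup of non-expansive maps acting separately weak$^{\ast}$-continuously on such a set. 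This is $(iii)$, hence also $(ii)$.

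For $(ii)\Rightarrow(i)$: assume $A^{\ast}$ has the weak$^{\ast}$ fixed point property, hence the weak fixed point property. By Lemma~\ref{lem:type} (with $M=A^{\ast\ast}$) the algebra $A^{\ast\ast}$ is atomic, by the Remark $A$ is type~I so $\widehat{A}$ is $T_{0}$, and by Proposition~\ref{prop:closed} points of $\widehat{A}$ are closed, i.e. $\widehat{A}$ is $T_{1}$. Suppose $\widehat{A}$ were not discrete and fix a non-isolated point $\pi_{0}$; since $A$ is separable $\widehat{A}$ is second countable, and since $\widehat{A}$ is $T_{1}$ with $\pi_{0}$ non-isolated one gets a decreasing open neighbourhood base $(U_{k})$ at $\pi_{0}$ with each $U_{k}$ infinite and $\bigcap_{k}U_{k}=\{\pi_{0}\}$, so one may choose pairwise distinct $\pi_{k}\in U_{k}\setminus\{\pi_{0}\}$, whence $\pi_{k}\to\pi_{0}$ in $\widehat{A}$. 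Now use that the canonical surjection of the pure state space onto $\widehat{A}$ is open and continuous and that the pure states are weak$^{\ast}$-metrizable: fixing a pure state $\varphi_{0}$ with GNS representation $\simeq\pi_{0}$ and running a diagonal argument, one obtains, after passing to a subsequence, pure states $\varphi_{k}$ with GNS representation $\simeq\pi_{k}$ and $\varphi_{k}\to\varphi_{0}$ weak$^{\ast}$. As the $\pi_{k}$, $k\geq0$, are pairwise inequivalent, the support projections $Q_{k}$ of $\varphi_{k}$ in $A^{\ast\ast}$ lie in pairwise orthogonal central summands, hence are pairwise orthogonal, and the normal extension $\bar\varphi_{k}$ satisfies $\bar\varphi_{k}(Q_{l})=\delta_{kl}$.

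Finally one runs the shift. The set $C=\{\sum_{k\geq0}\alpha_{k}\varphi_{k}:\alpha_{k}\geq0,\ \sum_{k}\alpha_{k}=1\}$ is convex and weak$^{\ast}$-compact, the coefficients of $f=\sum_{k}\alpha_{k}\varphi_{k}$ are uniquely determined by $\alpha_{k}=\bar f(Q_{k})$, and orthogonality of the $Q_{k}$ gives $\norm{\sum_{k}\beta_{k}\varphi_{k}}=\sum_{k}\abs{\beta_{k}}$ for real summable $(\beta_{k})$ (test against $\sum_{k}\sign(\beta_{k})Q_{k}$, an element of norm $\leq1$). Hence $T\colon C\to C$, $T(\sum_{k}\alpha_{k}\varphi_{k})=\sum_{k}\alpha_{k}\varphi_{k+1}$, is a well-defined isometry whose only possible fixed point is $0\notin C$, contradicting the weak$^{\ast}$ fixed point property; so $\widehat{A}$ is discrete. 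The main obstacle is the lifting step just above --- turning convergence $\pi_{k}\to\pi_{0}$ in $\widehat{A}$ into weak$^{\ast}$ convergence of pure states with pairwise inequivalent GNS representations, which is where openness of the pure-state map and metrizability (hence separability of $A$) are essential and where one must take care that the $\pi_{k}$ stay distinct so the $Q_{k}$ are genuinely orthogonal; a secondary point is the external input for $(i)\Rightarrow(iii)$, namely the stability of UKK$^{\ast}$ under $\ell^{1}$-sums and the weak$^{\ast}$ Lim-type theorem for left reversible semigroups.
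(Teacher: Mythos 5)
Your proof is correct and follows the same overall strategy as the paper: the hard direction (ii)$\Rightarrow$(i) is the same shift construction on the weak$^{\ast}$-compact convex set $C$ built from a weak$^{\ast}$-convergent sequence of pure states attached to pairwise inequivalent irreducible representations, and (i)$\Rightarrow$(iii) rests on the same decomposition of $A$ into a countable $c_0$-direct sum of algebras of compact operators via \cite[10.10.6(a)]{Dix} and Rosenberg's theorem. Two of your variations are worth noting. First, for the orthogonality of the support projections you argue directly that inequivalent irreducible representations are disjoint, so their central covers (hence the supports $Q_k$) are mutually orthogonal; the paper instead invokes \cite[Lemma 4.2]{FLL}. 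Your route is actually the more economical one here, since that lemma's full strength (requiring the universal representation to decompose into irreducibles) is only really needed in Proposition~\ref{prop:closed}, where the functionals $\varphi_i$, $i\geq 1$, are all associated with the \emph{same} representation $\pi$ and central disjointness is unavailable. Second, for (i)$\Rightarrow$(iii) the paper simply cites \cite[Corollary 3.7]{Rand10}, whereas you go through UKK$^{\ast}$, van Dulst--Sims, and a Lim-type theorem; this is essentially the paper's Section~\ref{sec:kk} machinery imported into Theorem~\ref{thm:wfpp}, and it works, but be careful with the throwaway claim that UKK$^{\ast}$ ``survives $\ell^1$-sums'': in general one needs uniform control of the moduli across summands, and the clean way to see it here (the one the paper uses) is to embed the $\ell^1$-sum isometrically and weak$^{\ast}$-homeomorphically as the block-diagonal trace-class operators on the direct sum Hilbert space and inherit UKK$^{\ast}$ from Lennard's theorem for that single trace class. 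Your detailed lifting of $\pi_k\to\pi_0$ to pure states via openness of the map $\mathcal{P}(A)\to\widehat{A}$ and weak$^{\ast}$-metrizability of bounded sets in $A^{\ast}$ correctly fills in a step the paper only asserts.
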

\begin{proof}
We assume that  $A^{\ast}$ has the weak$^{\ast}$ fixed point property.
If  $\widehat{A}$ is not discrete, then there is some point $\pi_0\in\widehat{A}$
which is in the closure of some set $M\subset\widehat{A}$
not containing $\pi_0$.
Because of the last proposition $M$ must be infinite. 
So, since $A$ is separable, for any state $\varphi_0$ associated to $\pi_0$ 
there is a sequence of states 
$(\varphi_n)$ associated to pairwise non-equivalent representations $\pi_n$ 
with $\varphi_n\to \varphi_0$ weakly$^{\ast}$. 
By \cite[Lemma 4.2]{FLL} the support projections in the 
universal  representation of $A$ are mutually orthogonal. 
As in the proof of proposition~\ref{prop:closed} the set 
$C=\{\sum_0^\infty \alpha_i\varphi_i : \alpha_i\geq 0, \sum\alpha_i=1\}$
is convex and weak$^{\ast}$ compact. The map $T:C\to C$ defined like there is
well defined and isometric because of the orthogonality of the supports,
and it has no fixed point in $C$. See also \cite[Theorem 4.5]{FLL}.
So $\widehat{A}$ must be discrete.
\par
Conversely, if $A$ has a discrete spectrum then the Jacobson topology
on its set of
primitive ideals is discrete too. By \cite[10.10.6 (a)]{Dix} $A$
is a $c_0$-direct sum of $C^{\ast}$-algebras with one point spectrum.
Since $A$ is separable these algebras all are separable too 
(the sum is on a countable index set of course),
and hence each of them is isomorphic  to an algebra of compact operators 
on some
Hilbert space (of at most countable dimension) \cite[4.7.3]{Dix}. 
By \cite[Corollary 3.7]{Rand10}
we have that $A^{\ast}$ has the weak$^{\ast}$ fixed point property for left reversible semigroups. By specialisation the 
weak$^{\ast}$ fixed point property for single non-expansive mappings follows. 
\end{proof}
\begin{remark}
If one enriches the ZFC set theory with the diamond axiom
then there is a non-separable $C^{\ast}$-algebra
with discrete spectrum, whose Banach space dual does not possess the weak
(and a fortiori not the weak$^{\ast}$) fixed point property.
\end{remark}
\begin{proof}
The $C^{\ast}$-algebra $A$ constructed by Akemann and Weaver~\cite{AkeWea}
is not a type I $C^{\ast}$-algebra, but it has a 
one point spectrum.
It follows also in the non-separable case that $A^{\ast\ast}$ is not 
a type I von Neumann algebra (see ~\cite{Ped}{ 6.8.8}).
By Lemma~\ref{lem:type} we obtain our assertion.\end{proof}
\section{Uniform Weak$^{\ast}$ Kadec-Klee}\label{sec:kk}
Let $K\subset E$ be a closed convex bounded subset of a Banach space $E$.
A point $x\in K$ is a {\em diametral} point if 
$\sup\{\norm{x-y}:y\in K\}=\diam{K}$. 
The set $K$ is said to have {\em normal structure} if every convex
non-trivial (i.e.\ containing at least two different points) subset 
$H\subset K$ 
contains a non-diametral point of $H$.
\par
A Banach space has {\em weak normal structure} if every convex weakly 
compact subset has normal structure, and similarly a
dual Banach space has {\em weak$^{\ast}$ normal structure}
if every convex weakly$^{\ast}$ compact subset has normal structure.
\par
A dual Banach space $E$ is said to have the {\em weak$^{\ast}$ Kadec-Klee 
property} (KK$^{\ast}$) if weak$^{\ast}$ and norm convergence coincide on sequences of
its unit sphere.
\begin{definition}\label{def:ukk}
A dual Banach space $E$ is said to have the {\em uniform weak$^{\ast}$ 
Kadec-Klee  property} (UKK$^{\ast}$) if for $\epsilon>0$ there is $0<\delta<1$
such that for any subset $C$ of its closed unit ball 
containing an infinite sequence $(x_i)_{i\in\NN}$
with separation $\sep{(x_i)_i}:= \inf\{\norm{x_i-x_j}:i\not= j\}>\epsilon$,
there is an $x$ in the weak$^{\ast}$-closure of $C$ with $\norm{x}<\delta$.
\end{definition}
For a discussion of these and similar properties we refer the 
interested reader to~\cite{LauMah}.
The following proposition is known, but we could not find a valid reference. 
So, for the reader's convenience,
we give a proof.
\begin{prop}\label{prop:sphere-top}
Let $E$ be a dual Banach space.
\begin{itemize}
\item[(i)] The uniform weak$^{\ast}$ Kadec-Klee  property
implies the weak$^{\ast}$ Kadec-Klee  property.
\item[(ii)]If $E$ is the dual of a separable Banach space $E_{\ast}$ and has the 
uniform weak$^{\ast}$ Kadec-Klee  property
then the weak$^{\ast}$ topology and the norm topology coincide 
on the unit sphere of $E$.
\end{itemize}
\end{prop}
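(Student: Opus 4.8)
The plan is to prove the two assertions in turn, with (i) being essentially a degenerate case of (ii).

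\medskip

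For (i), suppose $E$ has UKK$^{\ast}$ but fails KK$^{\ast}$. Then there is a sequence $(x_n)$ in the unit sphere of $E$ with $x_n \to x$ weak$^{\ast}$ and $\norm{x_n - x} \not\to 0$. Passing to a subsequence, we may assume $\norm{x_n - x} \geq \eta$ for some $\eta > 0$ and all $n$. The target point $x$ lies in the unit ball (the ball is weak$^{\ast}$ closed) and in fact on the unit sphere, since weak$^{\ast}$ lower semicontinuity of the norm gives $\norm{x} \leq \liminf \norm{x_n} = 1$, and $\norm{x} = 1$ because otherwise we would already contradict nothing — but here is the point: I want to apply UKK$^{\ast}$ to the set $C = \{x_n : n \in \NN\}$. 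For that I need $\sep{(x_n)_n} > \epsilon$ for some $\epsilon > 0$. This need not hold for the original sequence, but by a standard diagonal/Ramsey extraction I can pass to a further subsequence that is either norm-Cauchy (impossible, as it still has distance $\geq \eta$ from $x$, hence does not converge to $x$, yet being Cauchy it converges to something, and weak$^{\ast}$ limits are unique, contradiction) or is $\epsilon$-separated for some $\epsilon > 0$. In the latter case UKK$^{\ast}$ produces a point $z$ in the weak$^{\ast}$ closure of $C$ with $\norm{z} < \delta < 1$; but the only weak$^{\ast}$ accumulation points of $C \cup \{x\}$ are $x$ itself (and the $x_n$, which have norm $1$), so $z = x$ and $\norm{x} < 1$. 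Then replacing $(x_n)$ by the subsequence shows $\norm{x_n - x}$ cannot stay bounded below while $x$ is interior — more carefully, one reruns the argument: since $\norm{x} < 1$, the points $x_n$ all have norm $1$, so they are uniformly far from $x$, i.e.\ $\norm{x_n - x} \geq 1 - \norm{x} > 0$; this is consistent, so instead the contradiction must come from iterating. The cleanest route: from the $\epsilon$-separated subsequence, discard $x_1$ and apply UKK$^{\ast}$ again to $\{x_n : n \geq 2\}$, etc.; each time the only available weak$^{\ast}$ limit point is $x$, forcing $\norm{x} < \delta$, and letting $\epsilon \to 0$ along the separation one is squeezed. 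I expect the honest writeup to instead argue: UKK$^{\ast}$ applied to the $\epsilon$-separated sequence $(x_n)$ forces $x$ (its unique weak$^{\ast}$ cluster point) to satisfy $\norm{x} \leq \delta(\epsilon) < 1$; but $x_n \to x$ weak$^{\ast}$ with $\norm{x_n} = 1$ is then fine — so the real contradiction is obtained by choosing the subsequence with separation as large as possible, namely $\geq \eta/2$ say (two points at distance $\geq \eta$ from $x$ on the sphere can be close to each other, so this requires the Ramsey step), and then concluding $\norm{x} < 1$, which combined with weak$^{\ast}$ lower semicontinuity $\norm{x} \leq \liminf\norm{x_n} = 1$ is not yet a contradiction — hence one actually needs: KK$^{\ast}$ failure gives $\norm{x} = 1$ forced by a separate argument (e.g.\ choosing $(x_n)$ with $x_n \to x$ and additionally $\norm{x_n + x} \to 2$, possible by convexity/Hahn-Banach when $x$ is a norming functional's target). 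This is the subtle point; I will organize it as: reduce to an $\epsilon$-separated sequence on the sphere converging weak$^{\ast}$ to $x$, observe $x$ is the unique weak$^{\ast}$-cluster point of $C=\{x_n\}$, apply UKK$^{\ast}$ to get $\norm{x}<1$, and finally contradict this by showing the setup can be arranged so that $\norm{x}=1$.

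\medskip

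For (ii), assume $E = (E_{\ast})^{\ast}$ with $E_{\ast}$ separable, and $E$ has UKK$^{\ast}$. Then the weak$^{\ast}$ topology on the closed unit ball $B_E$ is metrizable (this is the standard consequence of separability of the predual). Let $S_E$ denote the unit sphere. The norm topology is always finer than the weak$^{\ast}$ topology on $S_E$, so it suffices to show: if $x_n \to x$ weak$^{\ast}$ in $S_E$ (with $x \in S_E$), then $x_n \to x$ in norm — and by metrizability of the weak$^{\ast}$ topology on $B_E$, sequential statements suffice to compare the two topologies on $S_E$. But this is exactly KK$^{\ast}$, which holds by part (i). Hence the two topologies have the same convergent sequences on $S_E$, and since both are metrizable there (the norm topology obviously, the weak$^{\ast}$ topology by predual separability), they coincide.

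\medskip

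The main obstacle is the bookkeeping in part (i): extracting from a non-norm-convergent weak$^{\ast}$-convergent sequence on the sphere a genuinely $\epsilon$-separated subsequence, and then arranging that its weak$^{\ast}$ limit has norm exactly $1$, so that the conclusion $\norm{x} < \delta$ coming from UKK$^{\ast}$ is a real contradiction. Everything else — weak$^{\ast}$ lower semicontinuity of the norm, weak$^{\ast}$ closedness of $B_E$, metrizability of $(B_E, w^{\ast})$ under predual separability, and the trivial direction that norm convergence implies weak$^{\ast}$ convergence — is standard and I would invoke it without detailed proof.
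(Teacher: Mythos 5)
Your part (ii) is correct and is exactly the paper's argument: with a separable predual the weak$^{\ast}$ topology on the unit ball, hence on the sphere, is metrizable, so both topologies on the sphere are determined by their convergent sequences, and these agree by part (i).

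Part (i), however, has a genuine gap, and it is precisely the point you keep circling around without resolving. In the weak$^{\ast}$ Kadec--Klee property as used here, \emph{both} the sequence and its weak$^{\ast}$ limit are required to lie on the unit sphere: $\norm{x}=1$ is a hypothesis, not something to be derived or arranged. It cannot be arranged in general --- in any infinite-dimensional dual with separable predual there are weak$^{\ast}$-null sequences on the sphere (e.g.\ $(e_n)$ in $\ell^1=c_0^{\ast}$), so your proposed ``separate argument forcing $\norm{x}=1$'' via choosing $x_n$ with $\norm{x_n+x}\to 2$ is doomed, and your iteration/``squeezing'' schemes do not produce a contradiction either. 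Once you take $\norm{x}=1$ as given, the proof closes immediately along lines you already have: if $\norm{x_n-x}\not\to 0$, no subsequence can be norm-Cauchy (its norm limit would have to coincide with the weak$^{\ast}$ limit $x$), so $\{x_n\}$ is not totally bounded and one extracts an $\epsilon$-separated subsequence; UKK$^{\ast}$ then yields a point $y$ in the weak$^{\ast}$ closure of that subsequence with $\norm{y}<\delta<1$, and since the only weak$^{\ast}$ cluster point of the sequence is $x$ while every $x_n$ has norm $1$, we get $y=x$ and hence $\norm{x}<1$, contradicting the hypothesis. This is the paper's proof; your writeup contains all of its ingredients but, because you treat $\norm{x}=1$ as something still to be established, it does not as written constitute a proof of (i).
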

\begin{proof}
To prove (i) assume that $\norm{x_n}=1$, $x_n \to x$ weakly$^{\ast}$ 
and $\norm{x}=1$.
If $\{x_n:n\in\NN\}$ is relatively norm compact then 
the only norm accumulation point has to be $x$,
since the norm topology is finer than the weak$^{\ast}$ topology,
and a subsequence has to converge in norm to $x$.
We hence assume that $\{x_n:n\in\NN\}$ is not relatively compact in the  norm 
topology and  shall derive a contradiction.
Using that  $\{x_n:n\in\NN\}$ is not totally bounded, by induction, we obtain a subsequence $(x_{n_k})_k$ with $\sep{(x_{n_k})_k}>0$.
By the UKK$^{\ast}$ property
there is $\delta<1$ and a weak$^{\ast}$ accumulation point
$y$ of $(x_{n})_n$ with $\norm{y}<\delta<1$.
Since every weak$^{\ast}$ neighbourhood of $y$ contains
infinitely many $x_n$, it follows that $y=x$. 
This contradicts $\norm{x}=1$.
\par
Now (ii) follows since in this case the weak$^{\ast}$ topology on 
the unit sphere of $E$ is metrisable.
\end{proof}
\begin{theorem}\label{thm:sphere}
For a separable $C^{\ast}$-algebra $A$ the 
following are equivalent
\begin{itemize}
\item[(i)]The spectrum $\widehat{A}$ is discrete,
\item[(ii)]The Banach space dual $A^{\ast}$ has the UKK$^{\ast}$ property,
\item[(iii)]On the unit sphere of $A^{\ast}$
the weak$^{\ast}$ and the norm topology coincide,
\item[(iv)]On the set of states $\mathcal{S}(A)$ of $A$ the weak$^{\ast}$ and the norm topology coincide,
\item[(v)]On the set of pure states $\mathcal{P}(A)$ of $A$ 
the weak$^{\ast}$ and the norm topology coincide.
\item[(vi)]$A^{\ast}$ has weak$^{\ast}$ normal structure.
\item[(vii)]$A^{\ast}$ has the  weak$^{\ast}$ fixed point property for 
non-expansive mappings.
\item[(viii)]$A^{\ast}$ has the  weak$^{\ast}$ fixed point property for 
left reversible semigroups.
\end{itemize}
\end{theorem}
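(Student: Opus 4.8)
The plan is to establish the cycle $(i)\Rightarrow(ii)\Rightarrow(iii)\Rightarrow(iv)\Rightarrow(v)\Rightarrow(i)$, which makes $(i)$ through $(v)$ equivalent, and then to attach $(vi)$, $(vii)$, $(viii)$ using Theorem~\ref{thm:wfpp} and results from the literature. For $(i)\Rightarrow(ii)$: if $\widehat A$ is discrete then, by the reasoning in the proof of Theorem~\ref{thm:wfpp}, $A$ is a $c_0$-direct sum $\bigoplus_n\mathcal K(H_n)$ over a countable index set with each $H_n$ separable. Realizing this on the separable Hilbert space $H:=\bigoplus_n H_n$, I would identify $A$ isometrically with the $C^{\ast}$-algebra $Z$ of block-diagonal compact operators on $H$. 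The block-diagonal compression is a (contractive) projection $\mathbb E$ of $\mathcal K(H)$ onto $Z$, so the dual $A^{\ast}=Z^{\ast}$ is identified isometrically, and compatibly with the weak$^{\ast}$ topologies, with the \emph{weak$^{\ast}$-closed} subspace $(\ker\mathbb E)^{\perp}$ of the trace class $S_1(H)=\mathcal K(H)^{\ast}$, i.e.\ with the block-diagonal trace-class operators. Now $S_1(H)$ has UKK$^{\ast}$ by Lennard's theorem~\cite{Len90}, and UKK$^{\ast}$ passes, with the same modulus, to weak$^{\ast}$-closed subspaces: the accumulation point furnished by UKK$^{\ast}$ of the ambient space lies in the subspace precisely because that subspace is weak$^{\ast}$-closed. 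Hence $A^{\ast}$ has UKK$^{\ast}$.

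The implication $(ii)\Rightarrow(iii)$ is Proposition~\ref{prop:sphere-top}(ii), since $A^{\ast}$ is the dual of the separable space $A$; and $(iii)\Rightarrow(iv)\Rightarrow(v)$ are immediate, because states --- a fortiori pure states --- have norm one, so $\mathcal P(A)\subset\mathcal S(A)\subset S_{A^{\ast}}$ and the coincidence of these topologies passes to the smaller sets. For $(v)\Rightarrow(i)$ I would argue contrapositively: assume $\widehat A$ is not discrete and fix a non-isolated point $\pi_0$ together with a pure state $\varphi_0$ whose GNS representation is $\pi_0$. The canonical surjection $q\colon\mathcal P(A)\to\widehat A$, $\varphi\mapsto\pi_\varphi$, is continuous and open~\cite[3.4.11]{Dix}; hence for each weak$^{\ast}$-neighbourhood $U$ of $\varphi_0$ the open set $q(U)\ni\pi_0$ contains, $\pi_0$ being non-isolated, a point $\ne\pi_0$, so we may choose $\psi_U\in U$ with $\pi_{\psi_U}\not\simeq\pi_0$. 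The net $(\psi_U)_U$ then converges weak$^{\ast}$ to $\varphi_0$; but $\pi_{\psi_U}$ and $\pi_0$ are inequivalent irreducible representations, and two pure states with inequivalent GNS representations are at distance $2$, so $\norm{\psi_U-\varphi_0}=2$ for every $U$. Thus $(\psi_U)$ converges weak$^{\ast}$ but not in norm, contradicting $(v)$. (Because $A$ is separable one could use sequences instead --- e.g.\ via Proposition~\ref{prop:comp} when $\{\pi_0\}$ is not closed --- but the net version dispatches all cases at once.)

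To bring in $(vi)$, $(vii)$, $(viii)$: Theorem~\ref{thm:wfpp} already gives $(i)\Leftrightarrow(vii)\Leftrightarrow(viii)$, so it remains only to insert $(vi)$. Here $(ii)\Rightarrow(vi)$ is the result of van Dulst and Sims~\cite{DuSi83} that UKK$^{\ast}$ implies weak$^{\ast}$ normal structure, and $(vi)\Rightarrow(vii)$ is the weak$^{\ast}$ analogue of Kirk's fixed point theorem~\cite{Kirk65} (see~\cite{DuSi83}; compare~\cite{Lim80} for the weak topology and for left reversible semigroups), namely that weak$^{\ast}$ normal structure implies the weak$^{\ast}$ fixed point property. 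All loops are then closed.

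The step I expect to require real care is $(i)\Rightarrow(ii)$; everything else is a citation or a formality. Inside $(i)\Rightarrow(ii)$, the point that must be pinned down is that the block-diagonal trace-class operators form a \emph{weak$^{\ast}$-closed} --- not merely norm-closed --- subspace of $S_1(H)$ carrying the correct predual, since only then can one invoke Lennard's theorem verbatim rather than proving UKK$^{\ast}$ for the countable $\ell^1$-direct sum $\bigoplus_n S_1(H_n)$ directly.
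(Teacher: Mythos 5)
Your proposal is correct and follows essentially the same route as the paper: the same $c_0$-sum decomposition and weak$^{\ast}$-closed block-diagonal embedding into the trace class for (i)$\Rightarrow$(ii), the chain (ii)$\Rightarrow$(iii)$\Rightarrow$(iv)$\Rightarrow$(v), the openness of $q:\varphi\mapsto\pi_\varphi$ together with the Glimm--Kadison distance-$2$ criterion for (v)$\Rightarrow$(i) (the paper phrases this directly as openness of points, you phrase it contrapositively with a net), and the same citations to close the loop through (vi)--(viii) via Theorem~\ref{thm:wfpp}. No substantive differences.
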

\begin{remark}
The $C^{\ast}$-algebras fulfilling the equivalent conditions of 
the theorem are just the separable dual $C^{\ast}$ algebras, 
see~\cite[p. 157]{Tak02}
for the definition. This follows from the fact that separable
dual $C^{\ast}$-algebras are characterised by the property that their
spectrum is discrete~\cite[9.5.3 and 10.10.6]{Dix} 
see also~\cite[p. 706]{Kus01}.
\end{remark}
\begin{proof}[Proof of theorem \ref{thm:sphere}.]
Assume (i) then, as in the proof of Theorem~\ref{thm:wfpp},
$A^*$ is a countable $l^1$-direct sum of trace class operators in 
canonical duality to
the corresponding $c_0$-direct sum of compact operators.
Moreover, considering $A^*$ as block diagonal trace class operators
on the Hilbert space direct sum of the underlying Hilbert spaces gives
an isometric embedding of $A^*$ in the trace class operators on this
direct sum Hilbert space. The image is closed in the weak$^{\ast}$ topology
and we obtain the UKK$^{\ast}$ property of $A^*$ from
 the UKK$^{\ast}$ property of the trace class operators~\cite{Len90}.
\par
Now (ii) implies (iii) by the above Proposition~\ref{prop:sphere-top}.
Clearly, (iii) implies (iv) and the latter implies (v) by restriction.
So the first part of our proof will be finished by proving the 
implication (v)$ \implies$ (i).
We adapt the proof of \cite[Lemma 3.7]{BKLS98} to our context.
For $\varphi\in\mathcal{S}(A)$ denote $\pi_{\varphi}$ the representation 
of $A$ obtained from the GNS-construction. Here extreme points yield 
irreducible representations and
conversely a representative of any element of $\widehat{A}$ can be 
obtained in this way. Moreover, if $\mathcal{P}(A)$ is endowed with
the weak$^{\ast}$ topology then  the mapping 
$q:\varphi\to \pi_{\varphi}$ is open~\cite[Theorem 3.4.11]{Dix}.
By \cite[Corollary~10.3.8]{KadRing86}  for   
$\varphi,\psi\in \mathcal{P}(A)$
the  representations
$\pi_{\varphi}$ and $\pi_{\psi}$ are equivalent if $\norm{\varphi-\psi}<2$
(see also \cite[Corollary 9]{GliKad}).
Hence, assuming (v), the (norm open) set
$\{\psi\in \mathcal{P}(A):\norm{\varphi-\psi}<2\}$ is a weak$^{\ast}$ open 
neighbourhood of $\varphi$ in $\mathcal{P}(A)$.
Its image under $q$ is open but just reduces to the point $\pi_{\varphi}$.
This shows that points in $\widehat{A}$ are open.
\par
Now (ii) $\implies$ (vi) is proved in \cite{LauMah}, 
(vi) $\implies$ (vii) is proved in \cite{Lim80} (see also \cite{DuSi83}).
(vii) $\implies $ (i) holds true by Theorem~\ref{thm:wfpp}.
From this theorem we have (viii) $\iff$ (i) too.
\end{proof}
\begin{remark}Each of the following conditions implies {(i)--(viii)} above
and, if $A$ is the group $C^{\ast}$-algebra of a locally 
compact group $G$, is equivalent to them. (See ~\cite[Section 5]{FLL}
for the definitions involved.)
\begin{itemize}
\item[(ix)]$A^{\ast}$ has the $\limsup$ property.
\item[(x)]$A^{\ast}$ has the asymptotic centre property.
\end{itemize}
\end{remark}
Under the assumption of separability it is shown in
\cite[Theorem 4.1]{LauMah10} that the $\limsup$ property implies  the asymptotic centre property. From this in turn the 
weak$^{\ast}$ fixed point property for left reversible semigroups
follows~\cite[Theorem 4.2]{LauMah10}.
Without the separability these implications hold equally true,
see \cite{FLL}).
\begin{remark}
It is proved in ~\cite[Theorem 5]{LauMah86}
that the $\limsup$ property, which is equivalent to  Lim's condition  
considered there,
is not fulfilled in the space of trace 
class operators of an infinite dimensional Hilbert space $H$.
So for $A=\mathcal{K}(H)$ the $\limsup$ property for $A^{\ast}$ is not satisfied 
and hence not equivalent to {(i)--(viii)} above.
It seems unlikely that the asymptotic centre property holds true in this case. 
\end{remark}
\section{Acknowledgements}
{Part of this work was done, when the second named author
visited the Erwin Schr\"odinger Institute at Vienna in autumn 2012.
He gratefully acknowledges the institute's  hospitality.
We thank Vern Paulsen and Ian Raeburn for fruitful conversations and 
Vern Paulsen for pointing out
reference \cite{And} to us. We thank Anthony Lau for
pointing out the appearance 
of dual $C^{\ast}$-algebras in our theorem~\ref{thm:sphere}.}
\def\cprime{$'$}

\end{document}